\numberwithin{equation}{section}
\newtheorem{theorem*}{Theorem}
\newtheorem{lemma*}{Lemma}
\theoremstyle{plain}
\newtheorem{theorem}{Theorem}
\newtheorem{lemma}[theorem]{Lemma}
\newtheorem{corollary}[theorem]{Corollary}
\theoremstyle{definition}
\newtheorem{remark}{Remark}
\begin{document}

\title
[{Sum of elements in finite Sidon sets II}] {Sum of elements in finite Sidon sets II}

\author
[Yuchen Ding] {Yuchen Ding}

\address{(Yuchen Ding) School of Mathematical Science,  Yangzhou University, Yangzhou 225002, People's Republic of China}
\email{ycding@yzu.edu.cn}

\keywords{Sidon sets; asymptotic formula}
\subjclass[2010]{Primary 11B75; Secondary 11B83.}

\begin{abstract} A set $S\subset\{1,2,...,n\}$ is called a Sidon set if all the sums $a+b~~(a,b\in S)$ are different. Let $S_n$ be the largest cardinality of the Sidon sets in $\{1,2,...,n\}$. In a former article, the author proved the following asymptotic formula
$$\sum_{a\in S,~|S|=S_n}a=\frac{1}{2}n^{3/2}+O(n^{111/80+\varepsilon}),$$
where $\varepsilon>0$ is an arbitrary small constant.
In this note, we give an extension of the above formula. 
We show that
$$\sum_{a\in S,~|S|=S_n}a^{\ell}=\frac{1}{\ell+1}n^{\ell+1/2}+O\left(n^{\ell+61/160}\right)$$
for any positive integers $\ell$. 
Besides, we also consider the asymptotic formulae of other type summations involving Sidon sets. The proofs are established in a more general setting, namely we obtain the asymptotic formulae of the Sidon sets with $t$ elements when $t$ is near the magnitude $n^{1/2}$.
\end{abstract}
\maketitle

\baselineskip 18pt

\section{Introduction}
The notion of the Sidon set dates back to a paper of Sidon \cite{Si} where he investigated the coefficients of the Fourier series. Since then, the researches of Sidon sets became rich after the past ninety years. Let $n$ be a positive integer and $S$ be a subset of the first $n$ numbers $\{1,2,...,n\}$. We call $S$ a Sidon set if all the sums $a+b~~(a,b\in S)$ are different. Denoting by $S_n$ the largest cardinality of the Sidon sets in $\{1,2,...,n\}$, i.e.,
$$S_n=\max_{S\subset\{1,2,...,n\},~S~\text{Sidon}}|S|.$$

It is a natural question to ask the magnitude of the number $S_n$. And there are a few literatures involving this magnitude. In 1941, Erd\H{o}s and Tur\'{a}n \cite{ET} proved that
$$\frac{1}{\sqrt{2}}\leqslant\liminf_{n\rightarrow\infty}\frac{S_n}{\sqrt{n}}\leqslant\limsup_{n\rightarrow\infty}\frac{S_n}{\sqrt{n}}\leqslant1.$$
Actually, they gave more explicit upper bound $S_n\leqslant n^{1/2}+O(n^{1/4})$. This upper bound was sharpen to $S_n\leqslant n^{1/2}+n^{1/4}+1$  by Lindstr\"{o}m \cite{Li}. Later, Cilleruelo \cite{Ci} improved slightly this bound by showing that $S_n\leqslant n^{1/2}+n^{1/4}+1/2$. In about 2017, Professor Yong--Gao Chen in Nanjing Normal University showed me that the original method of Erd\H{o}s and Tur\'{a}n with some more elaborate calculations can already lead to the bound $S_n\leqslant n^{1/2}+n^{1/4}+5/8$ (private communications). Recently, Balogh, F\"{u}redi and Roy \cite{BFR} proved that $S_n\leqslant n^{1/2}+0.998n^{1/4}$ for sufficiently large $n$ by combining the methods of Erd\H{o}s--Tur\'{a}n and Lindstr\"{o}m.

In the other direction, Chowla \cite{Ch} and Erd\H{o}s \cite{Er} independently found that the constructions of Singer \cite{Sin} and Bose \cite{Bo} provided the evidences of $S_n\geqslant (1+o(1))n^{1/2}$. Based on the results of gaps between primes (up to the year of 1999), it has been remarked by Ruzsa \cite{Ru} that $S_n\geqslant n^{1/2}+O(n^{11/40+\varepsilon})$, where $\varepsilon>0$ is an arbitrary small constant. This lower bound together with the upper bound give us that
\begin{equation}\label{eq1}
S_n=n^{1/2}+O(n^{11/40+\varepsilon}).
\end{equation}
It was conjectured by Erd\H os \cite{Er2} that for any $\varepsilon>0$, we have $S_n=n^{1/2}+O(n^{\varepsilon})$. He offered one thousand dollars for the first one to solve this conjecture.

In the famous book `{\it Unsolved Problems in Number Theory}' of Guy \cite[Problem E28]{Gu}, it asked what is the superemum of the sum of reciprocal terms taken from a Sidon set. It has been proved that
$$\max_{\substack{S\subseteq \{1,2,..,n\}\\\text{Sidon}}}\sum_{a\in S}\frac{1}{a}$$ is near $2$ for sufficiently large $n$. For results of this topic, one can refer to \cite{Abb,Erd,Lev,LO',MC,O'S,TY,Zha1,Zha2,Zha3}.  

For the collections of researches about Sidon sets, one can refer to the excellent survey article \cite{O'} and P.hD. thesis \cite{O'2} of O’Bryant.

In the Lindstr\"om argument for bounding the
maximal size of a Sidon set, it arises naturally the estimate of 
$$\sum_{\substack{S=\{ a_1<a_2<\cdot\cdot\cdot <a_t\}\\S\subset[1,n]~\text{Sidon}}}(n+1-i)a_i.$$
In 2021, the author \cite{Di} considered the magnitude of the sum of elements in Sidon sets. To be precise, let $S$ be a Sidon set in $\{1,2,...,n\}$ with $|S|=S_n$, using the asymptotic formula of $S_n$ in equation (\ref{eq1}) the author proved for any $\varepsilon>0$
\begin{equation}\label{eq2}
\sum_{a\in S}a=\frac{1}{2}n^{3/2}+O(n^{111/80+\varepsilon}).
\end{equation}
In this subsequent note, we provide the asymptotic formulae of the following two type summations:
$$\sum_{\substack{S=\{ a_1<a_2<\cdot\cdot\cdot <a_t\}\\S\subset[1,n]~\text{Sidon}}}a_i^\ell~~~\qquad~~~\text{and}~~~\sum_{\substack{S=\{a_1<a_2<\cdot\cdot\cdot <a_t\}\\S\subset[1,n]~\text{Sidon}}}ia_i^\ell.$$
Using these estimates with $\ell=1$, we finally attain the asymptotic formula of 
$$\sum_{\substack{S=\{a_1<a_2<\cdot\cdot\cdot <a_t\}\\S\subset[1,n]~\text{Sidon}}}(n+1-i)a_i$$
which arose in the text of Lindstr\"om. Although this formula can not effect the upper bound of the maximal Sidon sets in Lindstr\"om's argument, this is of interest itself.

Our first theorem deals with the general case of equation (\ref{eq2}).

\begin{theorem}\label{th1}Let $S=\{ a_1<a_2<\cdot\cdot\cdot <a_t\}$ be a Sidon set in $\{1,2,...,n\}$. Then for any positive integer $\ell$, we have
$$\sum_{i=1}^ta_i^\ell=
\begin{cases}
\frac{1}{\ell+1}tn^{\ell}+O_\ell\left(n^{\ell+3/8}\right),~&~\text{if~}t\geqslant  n^{1/2}-n^{1/4}\\
\\
\frac{1}{\ell+1}tn^{\ell}+O_\ell\left(n^{\ell+1/4}\sqrt{n^{1/2}-t}\right),~&\text{if~}t<n^{1/2}-n^{1/4}.
\end{cases}$$
\end{theorem}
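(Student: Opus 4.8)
The plan is to translate $\sum_{i=1}^t a_i^\ell$ into an integral against the counting function $N(x):=|S\cap[1,x]|$, isolate the main term coming from the heuristic $N(x)\approx\tfrac{t}{n}x$, and control the resulting error by an $L^2$ estimate for the discrepancy $\widetilde N(x):=N(x)-\tfrac{t}{n}x$ that uses the Sidon property through a second moment of short window counts. Throughout write $(y)_+:=\max\{0,y\}$, and freely use Lindstr\"om's bound $t\le n^{1/2}+n^{1/4}+1$.

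First, writing $a_i^\ell=\int_0^{a_i}\ell x^{\ell-1}\,dx$ and summing over $i$ gives $\sum_{i=1}^t a_i^\ell=\int_0^n\ell x^{\ell-1}\bigl(t-N(x)\bigr)\,dx=tn^\ell-\ell\int_0^n x^{\ell-1}N(x)\,dx$. Since $\ell\int_0^n x^{\ell-1}\cdot\tfrac{t}{n}x\,dx=\tfrac{\ell}{\ell+1}tn^\ell$, this is $\sum_{i=1}^t a_i^\ell=\tfrac{1}{\ell+1}tn^\ell-\ell\int_0^n x^{\ell-1}\widetilde N(x)\,dx$, and Cauchy--Schwarz gives $\bigl|\int_0^n x^{\ell-1}\widetilde N(x)\,dx\bigr|\le\bigl(\int_0^n x^{2\ell-2}\,dx\bigr)^{1/2}\|\widetilde N\|_{L^2[0,n]}\ll_\ell n^{\ell-1/2}\|\widetilde N\|_{L^2[0,n]}$. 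Thus it suffices to prove
\[
\|\widetilde N\|_{L^2[0,n]}\ \ll\ n^{7/8}+n^{3/4}\sqrt{(n^{1/2}-t)_+},
\]
since inserting this yields $O_\ell(n^{\ell+3/8})$ when $t\ge n^{1/2}-n^{1/4}$ (as then $(n^{1/2}-t)_+\le n^{1/4}$) and $O_\ell(n^{\ell+1/4}\sqrt{n^{1/2}-t})$ when $t<n^{1/2}-n^{1/4}$, which are precisely the two stated error terms.

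The core is a second moment estimate, with a parameter $h\asymp n^{3/4}$. Put $g(z):=|S\cap(z,z+h]|-\tfrac{th}{n}$. Expanding $\int|S\cap(z,z+h]|^2\,dz$ over $z$ in a suitable interval of length $\asymp n$ counts the diagonal, contributing $th$, plus, for each unordered pair of elements of $S$ at distance $d\in(0,h)$, a contribution $h-d$; as $S$ is Sidon each distance $d$ is realised by at most one pair, so $\int|S\cap(z,z+h]|^2\,dz\le th+h^2$. Combined with $\int|S\cap(z,z+h]|\,dz=th$ and $1-\tfrac{t^2}{n}\le\tfrac{2}{\sqrt n}(n^{1/2}-t)_+$, this gives $\int g(z)^2\,dz\ll th+\tfrac{h^2}{\sqrt n}(n^{1/2}-t)_+$, negligible boundary effects included. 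To convert control of the scale-$h$ increments $\widetilde N(z+h)-\widetilde N(z)=g(z)$ into control of $\widetilde N$ itself, I would telescope $\widetilde N(x)=\widetilde N\bigl(x-\lfloor x/h\rfloor h\bigr)+\sum_{1\le j\le\lfloor x/h\rfloor}g(x-jh)$, bound $|\widetilde N|\ll\sqrt h+\tfrac{th}{n}$ on the block $[0,h)$ using Lindstr\"om's inequality for $S\cap[1,h]$, and apply Cauchy--Schwarz to the $\ll n/h$ increments; since a given point lies in the shifted lattices $\{x-jh\}$ for $\ll n/h$ values of $x$, this produces $\|\widetilde N\|_{L^2}^2\ll nh+\tfrac{n^2}{h^2}\int g^2\ll nh+\tfrac{n^2 t}{h}+n^{3/2}(n^{1/2}-t)_+$. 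The choice $h\asymp n^{3/4}$ balances $nh$ against $n^2t/h$ at $\asymp n^{7/4}$ (using $t\ll n^{1/2}$), which gives the required $L^2$ bound.

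The step I expect to be the real obstacle is this last Poincar\'e-type passage from increments to $\widetilde N$: one must carefully treat the contribution of the initial block $[0,h)$, justify the multiplicity bound $\ll n/h$ for the overlaps of the shifted lattices as $x$ varies over $[0,n]$, and cope with the fact that $N$ is a step function, so that the window integrals and the transitions between sums and integrals need care with endpoints and boundary ranges (this is the source of the negligible errors mentioned above). A minor extra point is to keep all implied constants uniform in $t$ and to confirm that neither a different $h$ nor a sharper bound for $|S\cap I|$ improves the exponent $3/8$, which is pinned by the balance $nh=n^2t/h$ at $t\asymp n^{1/2}$.
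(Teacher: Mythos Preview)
Your proposal is correct, and the reduction step is essentially the same as in the paper: both arguments write $\sum_i a_i^\ell$ via Abel summation against the counting function $N(x)=|S\cap[1,x]|$ and isolate the main term $\tfrac{1}{\ell+1}tn^\ell$, leaving the discrepancy $\widetilde N(x)=N(x)-\tfrac{t}{n}x$ to be bounded. The genuine difference is in how the discrepancy is controlled. The paper quotes Cilleruelo's pointwise estimate (their Lemma~\ref{lem4}),
\[
|E_I|\le 52\,n^{1/4}\bigl(1+c^{1/2}n^{1/8}\bigr)\bigl(1+L_+^{1/2}n^{-1/8}\bigr),
\]
applies it with $I=[1,k]$, and sums $\sum_{k\le n}k^{\ell-1}|E_k|$ directly. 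You instead prove an $L^2$ bound for $\widetilde N$ from scratch: the Sidon property enters through the second moment of the window counts $|S\cap(z,z+h]|$ (each difference occurs at most once), and a Poincar\'e-type telescoping with step $h\asymp n^{3/4}$ converts this into $\|\widetilde N\|_{L^2}\ll n^{7/8}+n^{3/4}\sqrt{(n^{1/2}-t)_+}$, after which Cauchy--Schwarz against $x^{\ell-1}$ gives exactly the stated exponents. Your approach is more self-contained --- in effect you are reproving, in integrated $L^2$ form, the mechanism behind Cilleruelo's lemma rather than citing it --- while the paper's route is shorter and more modular. The two give identical error terms; the caveats you flag (boundary effects in the window integral, the initial block $[0,h)$, the multiplicity $\ll n/h$ in the telescoping) are all routine and your sketch handles them correctly.
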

Several corollaries can be deduced from Theorem \ref{th1}. The asymptotic formula mentioned in the abstract is proved by taking $t=S_n=n^{1/2}+O(n^{21/80})$ (see Lemma \ref{lem3}).

Taking $\ell=1$, we have the following corollary, which is another extension of equation (\ref{eq2}).

\begin{corollary}\label{co1}Let $S=\{ a_1<a_2<\cdot\cdot\cdot <a_t\}$ be a Sidon set in $\{1,2,...,n\}$. Then
$$\sum_{i=1}^ta_i=
\begin{cases}
\frac{1}{2}nt+O(n^{11/8}),~&\text{if~} t\geqslant n^{1/2}-n^{1/4}\\
\\
\frac{1}{2}nt+O\left(n^{5/4}\sqrt{n^{1/2}-t}\right),~&\text{if~}t<n^{1/2}-n^{1/4}.
\end{cases}$$
\end{corollary}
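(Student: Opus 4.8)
The plan is very short: Corollary \ref{co1} is nothing more than the special case $\ell=1$ of Theorem \ref{th1}. The whole proof therefore consists in substituting $\ell=1$ into the two displayed formulas of Theorem \ref{th1} and observing that everything collapses to the claimed statement. The main term $\tfrac{1}{\ell+1}tn^{\ell}$ becomes $\tfrac12 nt$. In the range $t\geqslant n^{1/2}-n^{1/4}$ the error $O_{\ell}\left(n^{\ell+3/8}\right)$ becomes $O\left(n^{1+3/8}\right)=O\left(n^{11/8}\right)$, the implied constant now being absolute because $\ell=1$ is a fixed integer; in the range $t<n^{1/2}-n^{1/4}$ the error $O_{\ell}\left(n^{\ell+1/4}\sqrt{n^{1/2}-t}\right)$ becomes $O\left(n^{5/4}\sqrt{n^{1/2}-t}\right)$. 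So once Theorem \ref{th1} is available, the proof is just these two substitutions and there is nothing further to do.

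For the reader it is worth indicating where the content actually lies, namely in Theorem \ref{th1} itself. The natural route there is partial summation: writing $T(x)=|S\cap[1,x]|=\#\{i:a_i\leqslant x\}$ one has the identity $\sum_{i=1}^{t}a_i^{\ell}=tn^{\ell}-\ell\int_{0}^{n}x^{\ell-1}T(x)\,dx$, so the problem reduces to estimating $\int_{0}^{n}x^{\ell-1}T(x)\,dx$. The Sidon property supplies two--sided control of $T$: applying the Lindstr\"{o}m/Cilleruelo upper bound to the Sidon set $S\cap[1,x]$ gives $T(x)\leqslant x^{1/2}+x^{1/4}+1$, while applying it to the Sidon set $S\cap(x,n]$ (a shift of a Sidon set in $[1,n-x]$) gives $T(x)\geqslant t-(n-x)^{1/2}-(n-x)^{1/4}-1$; note that $\int_{0}^{n}x^{\ell-1}\cdot\tfrac{t}{n}x\,dx=\tfrac{1}{\ell+1}tn^{\ell}/\ell\cdot\ell$, i.e. the desired leading behaviour corresponds exactly to $T$ being, on average, linear.

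The delicate point — and the place where I expect the genuine difficulty to sit — is that the two pointwise inequalities above only sandwich $\int_{0}^{n}x^{\ell-1}T(x)\,dx$ between two expressions with \emph{different} leading constants, so by themselves they give the right order of magnitude but not the precise coefficient $\tfrac{1}{\ell+1}$ (equivalently, not the coefficient $\tfrac12$ in Corollary \ref{co1}). Extracting the exact constant requires using the near--maximality of $S$ and balancing the two inequalities carefully, and this is precisely what the proof of Theorem \ref{th1} carries out. Since the present statement is allowed to invoke Theorem \ref{th1}, the proof of Corollary \ref{co1} is complete after setting $\ell=1$.
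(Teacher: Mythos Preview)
Your proof of the corollary is correct and identical to the paper's: the paper simply states ``Taking $\ell=1$, we have the following corollary'' after Theorem~\ref{th1}, and your first paragraph does exactly this substitution.

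One remark on your supplementary commentary about Theorem~\ref{th1}: the paper does \emph{not} obtain the main term by sandwiching $T(x)$ between the two Lindstr\"om-type bounds you describe (which, as you correctly note, would yield mismatched leading constants). Instead it invokes Cilleruelo's uniform-distribution lemma (Lemma~\ref{lem4}), which gives directly $|S\cap[1,k]|=\tfrac{k}{n}t+E_k$ with an explicit bound on $E_k$; the linear main term then produces the coefficient $\tfrac{1}{\ell+1}$ without any balancing, and the error terms come from $\sum_{k=1}^{n}k^{\ell-1}E_k$. So the ``delicate point'' you flag is not actually present in the paper's argument.
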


Taking $t=S_n$ and then Corollary \ref{co1} reduces to the following Corollary \ref{co3}. 
\begin{corollary}\label{co3}Let $S$ be a Sidon set in $\{1,2,...,n\}$ with $|S|=S_n$, then we have
$$\sum_{a\in S}a=\frac{1}{2}n^{3/2}+O(n^{221/160}).$$
\end{corollary}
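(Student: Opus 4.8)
The plan is to obtain Corollary \ref{co3} as an immediate specialization of Corollary \ref{co1}, which in turn follows from Theorem \ref{th1} with $\ell=1$. The only real input needed, beyond Corollary \ref{co1} itself, is the asymptotic size of $S_n$. First I would invoke the estimate $S_n=n^{1/2}+O(n^{11/40+\varepsilon})$ coming from equation (\ref{eq1}) (or the sharper packaging $S_n=n^{1/2}+O(n^{21/80})$ quoted after Theorem \ref{th1} via Lemma \ref{lem3}); the precise exponent in this error term is what ultimately controls the $221/160$ in the conclusion, so I would keep track of it carefully.

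Next I would split into the two regimes of Corollary \ref{co1} according to whether $t=S_n$ is at least $n^{1/2}-n^{1/4}$ or not. Since $S_n=n^{1/2}+O(n^{21/80})$ and $21/80<1/4$, for all sufficiently large $n$ we are in the first case, so Corollary \ref{co1} gives
\begin{equation*}
\sum_{a\in S}a=\frac{1}{2}nS_n+O(n^{11/8}).
\end{equation*}
Then I would substitute $S_n=n^{1/2}+O(n^{21/80})$ into the main term: $\tfrac12 n S_n=\tfrac12 n^{3/2}+O(n\cdot n^{21/80})=\tfrac12 n^{3/2}+O(n^{101/80})$. Comparing the two error terms, $n^{11/8}=n^{110/80}$ dominates $n^{101/80}$, so the total error is $O(n^{11/8})$. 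Here I would reconcile this with the stated exponent $221/160=110.5/80$: since $11/8=220/160<221/160$, the bound $O(n^{221/160})$ is in fact implied (and is stated in this slightly weaker—but cleaner-to-quote—form, perhaps to match a uniform presentation elsewhere in the paper), so writing $O(n^{11/8})\subseteq O(n^{221/160})$ finishes the estimate. For small $n$ the statement is trivial by adjusting the implied constant.

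The main (and essentially only) obstacle is bookkeeping with the exponents: one must be sure that the error incurred by replacing $S_n$ with $n^{1/2}$ in the factor $\tfrac12 n S_n$ does not exceed the $O(n^{11/8})$ already present, and that the regime hypothesis $t\geqslant n^{1/2}-n^{1/4}$ is genuinely satisfied by $t=S_n$ for large $n$. Both are straightforward once the value $S_n=n^{1/2}+O(n^{21/80})$ is in hand. No new analytic ideas are required beyond Corollary \ref{co1}; the proof is a short deduction.
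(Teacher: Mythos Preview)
Your argument contains a genuine arithmetic error that breaks the proof. You write ``$21/80<1/4$'', but in fact $21/80=0.2625>0.25=1/4$. Consequently, from $S_n=n^{1/2}+O(n^{21/80})$ you \emph{cannot} conclude that $S_n\geqslant n^{1/2}-n^{1/4}$ for all large $n$; as far as current prime-gap results go, $n^{1/2}-S_n$ may be as large as a constant times $n^{21/80}$, which exceeds $n^{1/4}$. (The paper even remarks explicitly that the assumption $S_n=n^{1/2}+O(n^{1/4})$ ``is still open at present due to the barrier of the gaps between primes''.) So you are not entitled to stay in the first regime of Corollary~\ref{co1}.

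This is exactly where the exponent $221/160$ comes from. When $t=S_n<n^{1/2}-n^{1/4}$, the second case of Corollary~\ref{co1} gives
\[
\sum_{a\in S}a=\tfrac12 n S_n+O\!\left(n^{5/4}\sqrt{n^{1/2}-S_n}\right),
\]
and using $n^{1/2}-S_n\ll n^{21/80}$ yields an error $O\!\left(n^{5/4}\cdot n^{21/160}\right)=O(n^{221/160})$. The main-term substitution $\tfrac12 n S_n=\tfrac12 n^{3/2}+O(n^{101/80})=\tfrac12 n^{3/2}+O(n^{202/160})$ is absorbed by this. In the first regime one indeed gets $O(n^{11/8})=O(n^{220/160})$, which is also $O(n^{221/160})$. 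Thus the stated exponent $221/160$ is not a ``cleaner-to-quote'' weakening of $11/8$ as you suggest; it is the honest output of the second case, which your proof omits.
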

It is certainly that $11/8<221/160<111/80$, so this is a small improvement of the error term in equation (\ref{eq2}). In the former article, the author pointed out that one can improve the error term in equation (\ref{eq2}) to $O(n^{11/8})$ under the assumption $S_n=n^{1/2}+O(n^{1/4})$. This assumption is still open at present due to the barrier of the gaps between primes. By some applications of the mean value estimate rather than the direct use of the size of Sidon sets, we can prove the following stronger result comparing with Corollary \ref{co3}.

\begin{theorem}\label{th2}Let $S$ be a Sidon set in $\{1,2,...,n\}$ with $|S|=S_n$, then for all $n\leqslant N$ but at most $O(N/(\log N)^{7/19})$ exceptions, we have
$$\sum_{a\in S}a=\frac{1}{2}n^{3/2}+O(n^{11/8}\log n).$$
\end{theorem}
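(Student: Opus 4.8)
The plan is to combine Corollary~\ref{co1} with a mean-value estimate for prime powers in short intervals, so as to replace the worst-case bound on $S_n$ --- which is exactly what produced the larger exponent in Corollary~\ref{co3}. Taking $t=S_n$ in Corollary~\ref{co1} (its first case if $S_n\geqslant n^{1/2}-n^{1/4}$, its second case otherwise), writing $\tfrac12 nS_n=\tfrac12 n^{3/2}+\tfrac12 n(S_n-n^{1/2})$, and using the Lindstr\"{o}m--Cilleruelo bound $S_n\leqslant n^{1/2}+n^{1/4}+O(1)$, one gets in all cases
$$\Bigl|\sum_{a\in S}a-\tfrac12 n^{3/2}\Bigr|\ \ll\ n^{11/8}+n\,(n^{1/2}-S_n)^{+}+n^{5/4}\bigl((n^{1/2}-S_n)^{+}\bigr)^{1/2}.$$
Hence the right-hand side is $O(n^{11/8}\log n)$ unless $n^{1/2}-S_n>n^{1/4}(\log n)^{2}$, and Theorem~\ref{th2} will follow once we prove
$$\#\bigl\{\,n\leqslant N:\ S_n<n^{1/2}-n^{1/4}(\log n)^{2}\,\bigr\}=O\!\left(N/(\log N)^{7/19}\right).$$

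To bound this set I would use a dense Sidon set construction. By the Singer (or Bose) perfect-difference-set construction, every prime power $q$ yields a Sidon set of size $q+1$ inside $\{1,\dots,q^{2}+q+1\}$, and since $n\mapsto S_n$ is non-decreasing this gives $S_n\geqslant q+1$ whenever $q$ is a prime power with $q^{2}+q+1\leqslant n$. Consequently, for $n$ large, $S_n\geqslant n^{1/2}-n^{1/4}(\log n)^{2}$ as soon as the interval $\bigl(\sqrt n-\tfrac12 n^{1/4}(\log n)^{2},\ \sqrt n\,\bigr]$ contains a prime power. Setting $x=\sqrt n$ (so that this interval has length $h\asymp x^{1/2}(\log x)^{2}$) and noting that a measurable set $E\subset[X,2X]$ of measure $\mu$ contains $O(X\mu+X/\log X)$ of the numbers $\sqrt n$, a dyadic splitting over $X\leqslant\sqrt N$ --- dominated by the top block $X\asymp\sqrt N$ --- reduces matters to the estimate
$$\operatorname{meas}\bigl\{\,x\in[X,2X]:\ (x-h,\,x\,]\ \text{contains no prime power}\,\bigr\}\ =\ O\!\left(X/(\log X)^{7/19}\right),\qquad h\asymp X^{1/2}(\log X)^{2}.$$

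The set just displayed is contained in $\{x\in[X,2X]:|\psi(x)-\psi(x-h)-h|\geqslant h\}$, so by Chebyshev's inequality its measure is at most $h^{-2}\int_{X}^{2X}\bigl(\psi(x)-\psi(x-h)-h\bigr)^{2}\,dx$, and the whole problem becomes a second-moment (mean-value) bound for $\psi(x)-\psi(x-h)-h$ on the scale $h\asymp X^{1/2}$. I would handle it in the classical manner: on expanding the square, the diagonal contributes $O(hX\log X)$; the main term of the off-diagonal is identified using the prime number theorem with the de la Vall\'ee Poussin error term, while the remaining off-diagonal part --- which a pure sieve cannot control because of the parity obstruction --- is bounded by a mean-value estimate for the relevant Dirichlet polynomial (equivalently, a zero-density input for $\zeta$). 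Inserting the resulting bound into Chebyshev's inequality and then balancing the interval length $h$ against the truncation level is where the exponent $7/19$ and the power of $\log n$ in the error term are generated; summing over the dyadic blocks then gives Theorem~\ref{th2}.

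The step I expect to be the main obstacle is precisely this mean-value estimate for $\psi(x)-\psi(x-h)-h$: because of the parity problem a purely sieve-theoretic treatment of the off-diagonal yields no usable cancellation, so an analytic input is unavoidable, and extracting the sharp constant $7/19$ (together with the correct power of $\log n$) demands a careful optimisation of parameters rather than one clean inequality. Everything else --- the reduction via Corollary~\ref{co1}, the Lindstr\"{o}m--Cilleruelo upper bound, the Singer construction, and the substitution $x=\sqrt n$ --- is routine bookkeeping by comparison.
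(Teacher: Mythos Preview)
Your reduction via Corollary~\ref{co1} to controlling $L_n=(n^{1/2}-S_n)^{+}$ is sound, but from that point on your plan diverges sharply from the paper's and is far heavier than necessary. The paper does \emph{not} work pointwise in $n$ and does \emph{not} touch the second moment of $\psi$ or any zero-density input. Instead it runs a first-moment argument: from the same estimate $\bigl|\sum_{a\in S}a-\tfrac12 n^{3/2}\bigr|\ll n^{11/8}+L_n^{1/2}n^{5/4}$ one simply sums over all $n\le N$. The only nontrivial piece is $\sum_{n\le N}L_n^{1/2}n^{5/4}$; partitioning $[1,N]$ into blocks $(p_m^{2}-1,\,p_{m+1}^{2}-1]$ and using Bose's bound $S_n\ge p_m$ on each block, an elementary integral gives
\[
\sum_{n\le N}L_n^{1/2}n^{5/4}\ \ll\ \sum_{m}p_{m+1}^{9/4}(p_{m+1}-p_m)^{3/2}\ \le\ p_\ell^{11/4}\sum_{m}(p_{m+1}-p_m)\ \ll\ N^{15/8},
\]
so the total first moment is $O(N^{19/8})$. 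Markov's inequality then bounds the exceptional set, and the exponent $7/19$ is nothing more than an admissible choice below $8/19$ coming from the exponent $19/8$. No analytic input is used beyond the trivial telescoping $\sum_m(p_{m+1}-p_m)\le p_\ell$.

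Your route --- reduce to a measure estimate for $x\in[X,2X]$ with no prime in $(x-h,x]$, $h\asymp X^{1/2}(\log X)^{2}$, and then Chebyshev on $\int_X^{2X}(\psi(x)-\psi(x-h)-h)^{2}\,dx$ --- can be pushed through (the scale $h\asymp X^{1/2}$ lies comfortably in the Huxley range, and known mean-value theorems give an exceptional set much smaller than $X/(\log X)^{7/19}$), but two points deserve flagging. First, you have not actually produced the constant $7/19$: there is no parameter balance in your sketch that outputs it, and in fact it would \emph{not} emerge from a second-moment computation --- it is purely an artefact of the paper's first-moment-plus-Markov step. Second, the analytic input you call ``unavoidable'' is entirely avoidable here, as the paper's elementary argument shows; the parity obstruction you worry about never arises, because one needs no off-diagonal cancellation at all, only the crude average of $L_n^{1/2}$ over $n$.
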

From Theorem \ref{th2}, we immediately have the following corollary.

\begin{corollary}\label{co4}
Let $S$ be a Sidon set in $\{1,2,...,n\}$ with $|S|=S_n$, then the asymptotic formula $\sum_{a\in S}a=\frac{1}{2}n^{3/2}+O(n^{11/8}\log n)$ is true for almost all integers $n$.
\end{corollary}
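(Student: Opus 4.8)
The plan is to deduce Corollary~\ref{co4} directly from Theorem~\ref{th2}, since the corollary is merely a restatement of that theorem in the language of asymptotic density. Recall that a property $P(n)$ is said to hold for \emph{almost all} positive integers $n$ if the number of $n\leqslant N$ for which $P(n)$ fails is $o(N)$ as $N\to\infty$.

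First I would fix $N$ and let $\mathcal{E}(N)$ denote the set of those $n\leqslant N$ for which the asymptotic formula $\sum_{a\in S}a=\frac12 n^{3/2}+O(n^{11/8}\log n)$ fails for a maximal Sidon set $S\subseteq\{1,2,\dots,n\}$. Theorem~\ref{th2} gives $|\mathcal{E}(N)|=O\big(N/(\log N)^{7/19}\big)$. Since $(\log N)^{7/19}\to\infty$ as $N\to\infty$, we have $|\mathcal{E}(N)|/N=O\big(1/(\log N)^{7/19}\big)\to 0$, i.e.\ $|\mathcal{E}(N)|=o(N)$. By the definition recalled above, this is exactly the statement that the asymptotic formula holds for almost all integers $n$, completing the proof.

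There is essentially no obstacle here, as all the analytic content — the mean-value estimate for $S_n$ across a dyadic range of $n$, and the resulting control on the size of the exceptional set — already resides in Theorem~\ref{th2}; Corollary~\ref{co4} only unpacks the phrase ``almost all''. If one wished, a mild refinement would replace the single interval $[1,N]$ by a sum over dyadic blocks $[2^{k},2^{k+1}]$ to bound the exceptional set inside any subinterval, but this is not needed for the density-zero conclusion stated.
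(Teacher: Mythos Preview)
Your proposal is correct and matches the paper's approach exactly: the paper states that Corollary~\ref{co4} follows immediately from Theorem~\ref{th2}, and your argument simply makes explicit that $O\big(N/(\log N)^{7/19}\big)=o(N)$, which is precisely the meaning of ``almost all''.
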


As an application of Theorem \ref{th1}, we establish the last theorem in this note.
\begin{theorem}\label{th3}Let $S=\{ a_1<a_2<\cdot\cdot\cdot <a_t\}$ be a Sidon set in $\{1,2,...,n\}$. Then for any positive integer $\ell$ we have
$$\sum_{i=1}^tia_i^\ell=\begin{cases}
\frac{1}{\ell+2}t^2n^{\ell}+O_\ell\left(n^{\ell+3/8}t\right),~&~\text{if~}t\geqslant  n^{1/2}-n^{1/4}\\
\\
\frac{1}{\ell+2}t^2n^{\ell}+O_\ell\left(n^{\ell+1/4}t\sqrt{n^{1/2}-t}\right),~&\text{if~}t<n^{1/2}-n^{1/4}.
\end{cases}$$
\end{theorem}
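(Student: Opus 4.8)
The plan is to deduce Theorem~\ref{th3} from Theorem~\ref{th1} by a summation-by-parts (Abel) argument. Write $T_k = \sum_{i=1}^{k} a_i^{\ell}$ for the partial sums, and note that
\[
\sum_{i=1}^{t} i a_i^{\ell} = \sum_{i=1}^{t} i (T_i - T_{i-1}) = t T_t - \sum_{i=1}^{t-1} T_i,
\]
with $T_0 = 0$. Now Theorem~\ref{th1} applied to the Sidon set $\{a_1 < \cdots < a_k\}$ (which is itself a Sidon set in $\{1,\dots,n\}$, of size $k \le t$) gives $T_k = \frac{1}{\ell+1} k n^{\ell} + E_k$, where the error $E_k$ is $O_\ell(n^{\ell+3/8})$ in the first regime and $O_\ell(n^{\ell+1/4}\sqrt{n^{1/2}-k})$ in the second. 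Substituting,
\[
\sum_{i=1}^{t} i a_i^{\ell} = \frac{1}{\ell+1}\Bigl(t^2 n^{\ell} - \sum_{i=1}^{t-1} i n^{\ell}\Bigr) + t E_t - \sum_{i=1}^{t-1} E_i = \frac{1}{\ell+1}\cdot\frac{t(t+1)}{2} n^{\ell} + t E_t - \sum_{i=1}^{t-1} E_i,
\]
and $\frac{1}{\ell+1}\cdot\frac{t(t+1)}{2} = \frac{1}{2(\ell+1)} t^2 + O(t) n^{\ell}$. Here one must be slightly careful: the claimed main term is $\frac{1}{\ell+2} t^2 n^{\ell}$, not $\frac{1}{2(\ell+1)} t^2 n^{\ell}$, so the naive partial-summation main term is \emph{wrong} by a constant factor. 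The resolution is that Theorem~\ref{th1} must be applied not with the blunt main term $\frac{1}{\ell+1} k n^{\ell}$ but in a sharper form for the partial set $\{a_1,\dots,a_k\}$ — indeed $\{a_1,\dots,a_k\}$ is a Sidon set inside $\{1,\dots,a_k\}$, and $a_k$ itself is close to $k^2$ in the relevant range, so the correct main term for $T_k$ is really of order $\frac{1}{\ell+1} k \cdot a_k^{\ell} \approx \frac{1}{\ell+1} k^{2\ell+1}$; only when $k = t$ and $a_t \asymp n$ does this become $\frac{1}{\ell+1} t n^{\ell}$.

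So the cleaner route is: apply Theorem~\ref{th1} only once (to the full set $S$), and handle $\sum_{i=1}^{t} i a_i^{\ell}$ directly by comparison with the ``model'' Sidon set, exactly as Theorem~\ref{th1} itself is presumably proved. Concretely, one expects the proof of Theorem~\ref{th1} to proceed by showing $a_i^{\ell} = \frac{\ell}{\ell+1}\bigl((\text{something})\bigr)$ via the bound $|a_i - c\, i^2|$ being small for a suitable constant, or more likely via the identity $\sum_{i} a_i^{\ell} = \ell \int_0^{\infty} x^{\ell-1} |\{i : a_i > x\}| \, dx$ together with the Erd\H{o}s--Tur\'an / Lindstr\"om-type bound $|\{a \in S : a \le x\}| = \sqrt{x} + O(x^{3/8})$ uniformly. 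Granting a counting estimate of the shape
\[
\#\{i : a_i \le x\} = \sqrt{x} + O\!\bigl(x^{3/8} + (\text{correction depending on } t)\bigr),
\]
we write $i a_i^{\ell}$ and sum: $\sum_{i=1}^{t} i a_i^{\ell} = \sum_{i=1}^{t} i a_i^{\ell}$, and use Abel summation in the variable $a_i$ rather than in $i$. Setting $N(x) = \#\{i : a_i \le x\}$, we have $i = N(a_i)$, hence
\[
\sum_{i=1}^{t} i a_i^{\ell} = \sum_{a \in S} N(a) a^{\ell} = \int_{0}^{n} N(x)\, d\!\left(\sum_{a \in S, a \le x} a^{\ell}\right),
\]
and then a second application of the counting estimate to $\sum_{a \le x} a^{\ell}$ (which is $\frac{1}{\ell+1} x^{\ell+1/2} \cdot (\ell+1/2)/(\ell+1)\cdots$, i.e.\ effectively $\int_0^x y^{\ell} d(\sqrt y) = \frac{1}{2}\int_0^x y^{\ell - 1/2} dy = \frac{1}{2\ell+1} x^{\ell+1/2}$ to leading order) gives the main term $\int_0^n \sqrt{x} \cdot \frac{2\ell+1}{2} \cdot \frac{1}{2\ell+1} x^{\ell - 1/2}\, dx \cdot (\text{bookkeeping}) = \int_0^n \frac{1}{2} x^{\ell}\, dx \cdot (\cdots)$. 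Carrying the constants honestly, $\int_0^n N(x) \, dM(x)$ with $N(x) \sim \sqrt x$, $M(x) \sim \frac{1}{2\ell+1} x^{\ell+1/2}$ yields $\sim \frac{1}{2} \cdot \frac{2\ell+1}{2} \cdot \frac{1}{2\ell+1}\int_0^n x^{\ell} \, dx$? — this is exactly the constant-chasing that must be done carefully, and it should land on $\frac{1}{\ell+2} t^2 n^{\ell}$ after using $t \approx \sqrt n$; the factor $t^2$ rather than $t$ appears precisely because $i$ ranges up to $t \asymp \sqrt n$, contributing an extra power.

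The main obstacle, then, is bookkeeping rather than a new idea: one must (i) record the uniform-in-$t$ counting estimate $\#\{i : a_i \le x\} = \sqrt{x} + (\text{error})$ that underlies Theorem~\ref{th1} — ideally extracting it as an intermediate lemma in the proof of Theorem~\ref{th1} — (ii) apply it \emph{twice} (once to count indices, once to sum powers), tracking how the two error terms interact under the integral/Abel-summation, and (iii) verify that in the regime $t \ge n^{1/2} - n^{1/4}$ the error is $O_\ell(n^{\ell+3/8} t)$ and in the complementary regime it is $O_\ell(n^{\ell+1/4} t \sqrt{n^{1/2}-t})$ — note these are exactly the Theorem~\ref{th1} errors multiplied by the extra factor $t$, which is the expected inflation from the weight $i \le t$. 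The subtle point to get right is that one should not naively cite Theorem~\ref{th1} for partial sets $\{a_1,\dots,a_k\}$ with small $k$ (where $a_k$ is far from $n$ and the stated main term is not sharp); instead the Abel summation should be organized in the variable $a_i$ (equivalently, against the counting function $N$), so that every invocation of the counting input is at scale comparable to $n$. With that organization the computation is mechanical and the two cases of the theorem fall out exactly as in Theorem~\ref{th1}, with a uniform extra factor of $t$.
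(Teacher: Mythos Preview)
Your proposal contains the right starting identity but then goes off track on the crucial input, so the argument as written would not produce the stated theorem.

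You correctly arrive (after discarding the Abel summation in $i$, which indeed gives the wrong constant) at
\[
\sum_{i=1}^{t} i a_i^{\ell} \;=\; \sum_{a\in S} N(a)\,a^{\ell},\qquad N(x)=\#\{i:a_i\le x\}.
\]
This is exactly how the paper begins. The gap is in what you substitute for $N$. You assert a counting estimate of the shape $N(x)=\sqrt{x}+O(\cdots)$ and then try to force the constants to come out. But that model is \emph{wrong}: the input underlying Theorem~\ref{th1} is Cilleruelo's equidistribution lemma (Lemma~\ref{lem4}), which says a dense Sidon set in $[1,n]$ is \emph{linearly} equidistributed, i.e.\ $N(x)=\dfrac{x}{n}\,t+E_x$ with the explicit error $|E_x|\ll n^{1/4}(1+(x/n)^{1/2}n^{1/8})(1+L_+^{1/2}n^{-1/8})$. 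In particular $N(x)\approx x/\sqrt{n}$ when $t\approx\sqrt{n}$, which equals $\sqrt{x}$ only at $x=n$. If you plug your $\sqrt{x}$ model in and carry the constants honestly you get leading coefficient $\frac{2}{2\ell+3}$ (in the regime $t\sim\sqrt n$), not $\frac{1}{\ell+2}$; so the hand-wave ``it should land on $\frac{1}{\ell+2}t^2 n^{\ell}$'' fails.

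With the correct (linear) estimate the proof is one line rather than a double integration: substituting $N(a)=\frac{a}{n}t+E_a$ gives
\[
\sum_{i=1}^{t} i a_i^{\ell} \;=\; \frac{t}{n}\sum_{a\in S}a^{\ell+1}\;+\;\sum_{a\in S}a^{\ell}E_a,
\]
and a \emph{single} application of Theorem~\ref{th1} with exponent $\ell+1$ yields $\frac{t}{n}\cdot\frac{1}{\ell+2}tn^{\ell+1}=\frac{1}{\ell+2}t^2 n^{\ell}$ for the first sum, with error $\frac{t}{n}\cdot O_\ell(n^{(\ell+1)+3/8})=O_\ell(t\,n^{\ell+3/8})$ (resp.\ $O_\ell(t\,n^{\ell+1/4}\sqrt{n^{1/2}-t})$). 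The second sum is bounded directly from the explicit shape of $E_a$ and contributes the same order. This is precisely the paper's argument; there is no need for a second counting/Abel step, and in particular no need to invoke Theorem~\ref{th1} on truncated sets $\{a_1,\dots,a_k\}$ --- which, as you noticed yourself, is where your first attempt broke down.
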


By Theorem \ref{th3}, we finally offer the asymptotic formula of the summation occurred in the Lindstr\"om article.

\begin{corollary}\label{coro4}Let $S=\{ a_1<a_2<\cdot\cdot\cdot <a_t\}$ be a Sidon set in $\{1,2,...,n\}$. Then
$$\sum_{i=1}^t(n+1-i)a_i=\begin{cases}
\frac{1}{2}n^2t+O\left(n^{19/8}\right),~&~\text{if~}t\geqslant  n^{1/2}-n^{1/4}\\
\\
\frac{1}{2}n^2t+O\left(n^{9/4}\sqrt{n^{1/2}-t}\right),~&\text{if~}t<n^{1/2}-n^{1/4}.
\end{cases}$$
\end{corollary}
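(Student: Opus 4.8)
The plan is to reduce Corollary \ref{coro4} to the two asymptotic formulae already in hand, namely Corollary \ref{co1} and Theorem \ref{th3} with $\ell=1$. The starting point is the trivial identity
\[
\sum_{i=1}^t(n+1-i)a_i=(n+1)\sum_{i=1}^ta_i-\sum_{i=1}^tia_i,
\]
so it suffices to control the two sums on the right and then add. For the first, Corollary \ref{co1} gives $\sum_{i=1}^ta_i=\tfrac12nt+E_1$, where $E_1=O(n^{11/8})$ if $t\geqslant n^{1/2}-n^{1/4}$ and $E_1=O\bigl(n^{5/4}\sqrt{n^{1/2}-t}\bigr)$ otherwise, whence $(n+1)\sum_{i=1}^ta_i=\tfrac12n^2t+\tfrac12nt+O(nE_1)$. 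For the second, Theorem \ref{th3} with $\ell=1$ gives $\sum_{i=1}^tia_i=\tfrac13nt^2+E_2$, where $E_2=O(n^{11/8}t)$ in the first range and $E_2=O\bigl(n^{5/4}t\sqrt{n^{1/2}-t}\bigr)$ in the second. Combining, one obtains
\[
\sum_{i=1}^t(n+1-i)a_i=\tfrac12n^2t-\tfrac13nt^2+\tfrac12nt+O(nE_1)+O(E_2).
\]

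The remaining task is purely a matter of bookkeeping: to show that every term other than $\tfrac12n^2t$ is absorbed by the claimed error. The essential input here is the classical upper bound recalled in the introduction, $t=|S|\leqslant S_n\leqslant n^{1/2}+n^{1/4}+1$, so that $t=O(n^{1/2})$ and hence $t^2=O(n)$. Consequently $\tfrac13nt^2=O(n^2)$ and $\tfrac12nt=O(n^{3/2})$. In the range $t\geqslant n^{1/2}-n^{1/4}$ we have $nE_1=O(n^{19/8})$ and $E_2=O(n^{11/8}t)=O(n^{15/8})$; since $n^2$, $n^{3/2}$ and $n^{15/8}$ are all $O(n^{19/8})$, the first case follows. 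In the range $t<n^{1/2}-n^{1/4}$ we use $n^{1/2}-t\geqslant n^{1/4}$, so $\sqrt{n^{1/2}-t}\geqslant n^{1/8}$ and $n^{9/4}\sqrt{n^{1/2}-t}\geqslant n^{19/8}$; then $nE_1=O\bigl(n^{9/4}\sqrt{n^{1/2}-t}\bigr)$, $E_2=O\bigl(n^{7/4}\sqrt{n^{1/2}-t}\bigr)$, and the leftover terms $\tfrac13nt^2=O(n^2)$ and $\tfrac12nt=O(n^{3/2})$ are both $O(n^{19/8})=O\bigl(n^{9/4}\sqrt{n^{1/2}-t}\bigr)$, so everything collapses into $O\bigl(n^{9/4}\sqrt{n^{1/2}-t}\bigr)$. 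This yields the second case.

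I do not expect any genuine obstacle here; the only point worth flagging is that the honest secondary term $-\tfrac13nt^2$, which is of size roughly $n^2$, is deliberately \emph{discarded} into the error term, and this is legitimate precisely because $19/8>2$. One could of course retain it for a marginally sharper statement, but the form above is exactly the one that matches the Lindstr\"om sum $\sum_{i=1}^t(n+1-i)a_i$ and is all that is needed for the application described in the introduction.
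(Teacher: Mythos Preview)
Your proof is correct and follows exactly the same route as the paper: split via the identity $(n+1)\sum_{i}a_i-\sum_{i}ia_i$, invoke Corollary~\ref{co1} and Theorem~\ref{th3} with $\ell=1$, and absorb the secondary terms into the stated error. The paper's own proof is in fact terser and omits the bookkeeping you spell out, but the argument is the same.
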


\section{Lemmas}
In this section, we provide some lemmas which shall be used in the proofs of our theorems.
\begin{lemma}[Baker--Harman--Pintz]\cite{BHP}\label{lem1} Let $p_k$ be the $k$--th prime, then we have
$$p_{k+1}-p_k\ll p_k^{0.525}.$$
\end{lemma}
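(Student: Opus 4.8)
The plan is, in effect, simply to invoke \cite{BHP}: Lemma \ref{lem1} is the celebrated Baker--Harman--Pintz theorem on gaps between consecutive primes, and giving a genuine proof here would be entirely beyond the scope of this note. It is used below only as a black box, to guarantee that every interval of the shape $\bigl(x-x^{0.525},x\bigr]$ contains a prime once $x$ is large. Nevertheless, let me outline how such a statement is established.

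The classical route, going back to Hoheisel, Ingham and Huxley, is analytic: one estimates $\psi(x)-\psi(x-x^{\theta})$ from below through the explicit formula together with zero-density estimates for $\zeta(s)$ near the line $\mathrm{Re}\,s=1$, which by itself yields the admissible exponent $\theta=\tfrac{7}{12}+\varepsilon$. To push $\theta$ below $\tfrac12+\tfrac{1}{40}$ the zeros alone no longer suffice, and one instead counts primes in the short interval by a sieve. The key steps are: first, apply Harman's asymptotic (``alternative'') sieve to the characteristic function of the primes in $(x-x^{\theta},x]$, decomposing it into ``Type I'' sums $\sum_{m\le M}a_m\sum_{x-x^{\theta}<mn\le x}1$ and bilinear ``Type II'' sums $\sum_{m}\sum_{n}a_m b_n$ with $m,n$ ranging over carefully chosen dyadic blocks, plus a controlled sieve remainder; second, estimate each linear and bilinear piece over the short interval by passing to exponential sums $\sum_n e(\alpha n)$ and applying the large sieve together with the Deshouillers--Iwaniec mean-value bounds for sums of Kloosterman fractions and Watt's mean-value theorem for $\zeta$ on short intervals; third, carry out the bookkeeping over the allowable ranges of $m$ and $n$ so that the resulting main term dominates the accumulated error, which is precisely what pins down the exponent $0.525$.

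The main obstacle in this programme is the treatment of the Type II sums in the ``awkward'' middle range, where the variables are too large for the elementary bilinear estimates and too small for the zero-density input to take over; handling this range by splitting the sieve weights and feeding them back into the decomposition is exactly the technical heart of \cite{BHP}. Since reproducing this machinery is neither feasible nor necessary here, we content ourselves with quoting Lemma \ref{lem1} and refer the reader to \cite{BHP} for the details.
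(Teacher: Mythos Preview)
Your proposal is correct and matches the paper's treatment: the paper simply cites \cite{BHP} and gives no proof of Lemma~\ref{lem1}, using it purely as a black box. Your additional sketch of the Baker--Harman--Pintz method is accurate but goes beyond what the paper does; for the purposes of this note, the one-line citation suffices.
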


\begin{lemma}[Bose]\cite{Bo}\label{lem2} Let $p$ be a prime, then there are at least $p$ elements between $[1,p^2-1]$ such that all the sums of two of these elements are different modulo $p^2-1$. In other words, we have $S_{p^2-1}\geqslant p$ for primes $p$.
\end{lemma}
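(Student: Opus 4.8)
The plan is to use the classical construction of Bose via the field $\mathbb{F}_{p^2}$, which is the standard way to realize a Sidon set (perfect difference/$B_2$-type set) of size $p$ inside $\mathbb{Z}/(p^2-1)\mathbb{Z}$. First I would fix a generator $\theta$ of the cyclic multiplicative group $\mathbb{F}_{p^2}^{\times}$, which has order $p^2-1$. For each element $a$ of the subfield $\mathbb{F}_p \subset \mathbb{F}_{p^2}$ one considers the element $\theta + a$; since $\mathbb{F}_p^{\times}$ is contained in $\langle\theta\rangle$ and $\theta\notin\mathbb{F}_p$, each $\theta+a$ is a nonzero element of $\mathbb{F}_{p^2}$, hence equals $\theta^{b_a}$ for a unique exponent $b_a \in \mathbb{Z}/(p^2-1)\mathbb{Z}$. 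I would then define $B = \{\, b_a : a \in \mathbb{F}_p \,\}$, a set of $p$ residues modulo $p^2-1$ (one checks the $b_a$ are distinct because $a \mapsto \theta + a$ is injective on $\mathbb{F}_p$).

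Next I would verify the Sidon property modulo $p^2-1$. Suppose $b_a + b_c \equiv b_{a'} + b_{c'} \pmod{p^2-1}$ with $a,c,a',c' \in \mathbb{F}_p$. Exponentiating $\theta$, this means $(\theta+a)(\theta+c) = (\theta+a')(\theta+c')$ as elements of $\mathbb{F}_{p^2}$. Expanding, $\theta^2 + (a+c)\theta + ac = \theta^2 + (a'+c')\theta + a'c'$, so $(a+c)\theta + ac = (a'+c')\theta + a'c'$. Since $\{1,\theta\}$ is an $\mathbb{F}_p$-basis of $\mathbb{F}_{p^2}$, comparing coefficients gives $a + c = a' + c'$ and $ac = a'c'$, whence $\{a,c\} = \{a',c'\}$ as multisets. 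This shows all pairwise sums $b_a + b_c$ are distinct modulo $p^2-1$, i.e. $B$ is a Sidon set in $\mathbb{Z}/(p^2-1)\mathbb{Z}$. Finally, choosing representatives of $B$ in $\{1,2,\dots,p^2-1\}$ (adjusting by $p^2-1$ if any representative is $0$, which does not disturb the mod-$(p^2-1)$ Sidon condition), we obtain $p$ elements of $[1,p^2-1]$ whose pairwise sums are distinct modulo $p^2-1$; in particular they are a Sidon set in the ordinary sense, so $S_{p^2-1} \geqslant p$.

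The only genuinely delicate point — and the one I would be most careful about — is making sure $\theta + a \neq 0$ for every $a\in\mathbb{F}_p$, which is immediate since $-a \in \mathbb{F}_p$ while $\theta\notin\mathbb{F}_p$, and the injectivity and distinctness bookkeeping when passing from $\mathbb{F}_{p^2}^\times$ to exponents and then to integer representatives; all of this is routine. Everything else is a direct computation in $\mathbb{F}_{p^2}$ using that $\{1,\theta\}$ is a basis, so I do not expect any real obstacle.
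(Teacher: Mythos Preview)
Your argument is correct and is precisely the classical Bose construction: take a primitive root $\theta$ of $\mathbb{F}_{p^2}^{\times}$, form the discrete logarithms of $\theta+a$ for $a\in\mathbb{F}_p$, and use linear independence of $\{1,\theta\}$ over $\mathbb{F}_p$ to deduce the Sidon property modulo $p^2-1$. The paper itself does not supply a proof of this lemma but simply cites Bose's original paper \cite{Bo}, so there is no in-paper argument to compare against; what you have written is exactly the standard proof one finds in that reference. (A tiny remark: the case $b_a=0$ in fact cannot occur, since $\theta^0=1$ would force $\theta=1-a\in\mathbb{F}_p$, so the ``adjust by $p^2-1$'' step is never actually needed---but your treatment is still valid.)
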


\begin{lemma}\label{lem3} For $n\geqslant 2$, we have $S_n=n^{1/2}+O(n^{21/80})$.
\end{lemma}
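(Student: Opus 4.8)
The plan is to sandwich $S_n$ between two quantities that are both $n^{1/2}+O(n^{21/80})$: the upper bound is already classical, so the real content is the lower bound, which must combine Bose's construction (Lemma \ref{lem2}) with the Baker--Harman--Pintz bound on prime gaps (Lemma \ref{lem1}).

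First I would record the upper bound. By Erd\H{o}s--Tur\'an (or any of the later refinements quoted in the introduction, e.g. Lindstr\"om's $S_n\leqslant n^{1/2}+n^{1/4}+1$), one has $S_n\leqslant n^{1/2}+O(n^{1/4})$, and since $1/4<21/80$ this contributes an admissible error term. For the lower bound, fix $n\geqslant 2$ and let $p$ be the largest prime with $p^2-1\leqslant n$, i.e.\ $p$ is the largest prime not exceeding $\sqrt{n+1}$. Since $S_n$ is nondecreasing in $n$ and $p^2-1\leqslant n$, Lemma \ref{lem2} gives $S_n\geqslant S_{p^2-1}\geqslant p$. It remains to show $p\geqslant n^{1/2}-O(n^{21/80})$, i.e.\ that the largest prime below $\sqrt{n+1}$ is within $O(n^{21/80})$ of $\sqrt{n+1}$.

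The key step is the prime-gap estimate. Write $x=\sqrt{n+1}$. If $p_k<x\leqslant p_{k+1}$ then $p=p_k$ and $x-p\leqslant p_{k+1}-p_k\ll p_k^{0.525}\ll x^{0.525}$ by Lemma \ref{lem1}. (One should check the trivial case where $x$ itself lies below $2$ or where no prime is $\leqslant x$, which cannot happen for $n\geqslant 2$ since then $x\geqslant\sqrt{3}>1$ but one may simply take $n$ large, the small cases being absorbed in the $O$-constant.) Hence
$$p\geqslant x - O\!\left(x^{0.525}\right) = (n+1)^{1/2} - O\!\left((n+1)^{0.2625}\right) = n^{1/2} - O\!\left(n^{0.2625}\right),$$
using $(n+1)^{1/2}=n^{1/2}+O(n^{-1/2})$. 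Now $0.2625 = 21/80$, so combining with $S_n\geqslant p$ yields $S_n\geqslant n^{1/2}-O(n^{21/80})$. Together with the upper bound this gives $S_n = n^{1/2}+O(n^{21/80})$, as claimed.

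The only genuine obstacle is bookkeeping: making sure the exponent $0.525$ from Baker--Harman--Pintz, after being applied at scale $\sqrt{n}$, produces exactly $n^{0.525/2}=n^{21/80}$ and not something slightly larger, and confirming that the transition from $p^2-1$ to $n$ (the "rounding" from $\sqrt{n+1}$ down to the nearest prime) costs only $O(n^{21/80})$ and not more. Both are routine once one writes $x=\sqrt{n+1}$ and tracks constants. No deeper idea is needed; the lemma is essentially a repackaging of \eqref{eq1} with the explicit exponent $11/40=0.275$ there replaced here by the sharper $0.525/2=21/80$ coming directly from Lemma \ref{lem1}.
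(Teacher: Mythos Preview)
Your proof is correct and follows essentially the same approach as the paper: both combine the Erd\H{o}s--Tur\'an upper bound with the lower bound obtained by applying Bose's construction at the largest prime $p$ with $p^2-1\leqslant n$ and then invoking Baker--Harman--Pintz to show $p\geqslant n^{1/2}-O(n^{0.525/2})=n^{1/2}-O(n^{21/80})$. The only cosmetic difference is that the paper phrases the gap step via $p_t=p_{t+1}-(p_{t+1}-p_t)\geqslant\sqrt{n+1}+O(p_t^{0.525})$ whereas you write $x=\sqrt{n+1}$ and bound $x-p$ directly; the content is identical.
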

This is mentioned in \cite{O'} without proof and we offer one here for complement.
\begin{proof} Let $p_i$ be the $i$--th prime. Suppose that $p_t$ is the largest prime such that $p_t^2-1<n$, then we have
\begin{equation}\label{eq2.1}
p_t^2-1<n\leqslant p_{t+1}^2-1.
\end{equation}
By Lemmas \ref{lem1} and \ref{lem2}, it follows that
$$S_n\geqslant S_{p_{t}^2-1}\geqslant p_t=p_{t+1}-(p_{t+1}-p_t)\geqslant\sqrt{n+1}+O(p_t^{0.525}).$$
From the equation (\ref{eq2.1}), it is clear that $p_t^{0.525}\ll n^{0.525/2}=n^{21/80}$. Now the lemma follows from the fact that $S_n\leqslant n^{1/2}+O(n^{1/4})$.
\end{proof}

\begin{lemma}[Cilleruelo]\cite{Ci2}\label{lem4} Given a Sidon set $A\subset\{1,2,...,n\}$ with $|A|=n^{1/2}-L$. Then every subinterval $I\subset[1,n]$ with length $cn$ contains $c|A|+E_I$ elements of $A$, where
$$|E_I|\leqslant52n^{1/4}(1+c^{1/2}n^{1/8})(1+L_{+}^{1/2}n^{-1/8}),~~L_{+}=\max\{0,L\}.$$
\end{lemma}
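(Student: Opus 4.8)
The statement to prove is Cilleruelo's lemma on the equidistribution of a Sidon set in subintervals, with the explicit constant $52$. The natural approach follows the argument in \cite{Ci2}: one estimates, for a fixed subinterval $I\subset[1,n]$ of length $cn$, the counting function
$$
f(I)=\sum_{a\in A}\mathbf{1}_I(a)-c|A|=E_I,
$$
by relating it to exponential sums over $A$. The key tool is that, because $A$ is a Sidon set, the $L^4$-norm of the exponential sum $\widehat{A}(\theta)=\sum_{a\in A}e(a\theta)$ is small: $\int_0^1|\widehat{A}(\theta)|^4\,d\theta$ counts solutions of $a_1+a_2=a_3+a_4$, which for a Sidon set is $2|A|^2-|A|$. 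Convolving the indicator of $A$ with a Fej\'er-type kernel of width comparable to $n^{-1}$ (or a suitable smooth majorant/minorant), one obtains a main term $c|A|$ plus an error governed by $\int|\widehat{A}(\theta)||K(\theta)|\,d\theta$ away from $\theta=0$; splitting the dual variable into the ``major arc'' $|\theta|\ll 1/n$ and the complementary range, and applying H\"older with the $L^4$-bound on the minor arcs, produces the two factors $(1+c^{1/2}n^{1/8})$ and the overall $n^{1/4}$ scale.

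The steps, in order, would be: \emph{(i)} fix $I$ of length $cn$ and introduce a smoothed indicator $\psi_I$ with $\psi_I\leqslant \mathbf{1}_I\leqslant \Psi_I$, whose Fourier transforms decay like $\min(|I|,1/\|\theta\|)$ and are supported (essentially) on $|\theta|\ll 1/|I|$ after a further mollification at scale $\delta\asymp n^{-1}$; \emph{(ii)} write $\sum_{a\in A}\mathbf{1}_I(a)=\sum_a\psi_I(a)+(\text{boundary error})$, where the boundary error counts elements of $A$ in two intervals of length $\asymp \delta^{-1}$ and is itself handled by the trivial Sidon bound $|A\cap J|\ll |J|^{1/2}+\dots$; \emph{(iii)} expand $\sum_a\psi_I(a)$ by Fourier inversion as $c|A|+\sum_{k\neq0}\widehat{\psi_I}(k)\,\overline{\widehat{A}(k/q)}$-type terms on an appropriate discretization, bound the tail by Cauchy--Schwarz / H\"older using $\sum_k|\widehat{A}(k)|^4\ll |A|^2$ and $\sum_k|\widehat{\psi_I}(k)|^{4/3}\ll$ (explicit), and collect powers of $n$ and $c$; \emph{(iv)} track the dependence on $L$: when $|A|=n^{1/2}-L$ with $L>0$, the bound $|A|\leqslant n^{1/2}$ feeds an extra factor $(1+L_+^{1/2}n^{-1/8})$ through the $|A|^{1/2}$ that appears from the $L^4$ estimate; \emph{(v)} finally optimize the mollification scale $\delta$ and bookkeep all absolute constants to land at $52$.

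The main obstacle is the explicit constant: the structure of the argument is standard, but getting the numerical value $52$ requires choosing the smoothing kernels and the cutoff $\delta$ with care and keeping every inequality tight (in particular the passage through H\"older's inequality and the boundary-term estimate). Since this lemma is quoted verbatim from \cite{Ci2}, I would in fact not reprove it from scratch; the honest ``proof'' is a citation, and the paragraphs above merely indicate how Cilleruelo's original argument runs so that the reader sees why the shape of $E_I$ — the two correction factors and the $n^{1/4}$ scale — is exactly what the $L^4$-flatness of a Sidon set forces. For the present paper's purposes the relevant content is only the \emph{application} of this lemma (with $c$ ranging over dyadic scales and $L$ comparable to $n^{1/4}$ or smaller), which is where the error terms $n^{\ell+3/8}$ and $n^{\ell+1/4}\sqrt{n^{1/2}-t}$ in Theorems \ref{th1} and \ref{th3} come from.
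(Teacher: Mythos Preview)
The paper does not prove this lemma at all: it is stated with the attribution ``[Cilleruelo]'' and the citation \cite{Ci2}, and then used as a black box. Your observation that ``the honest `proof' is a citation'' is therefore exactly what the paper does, and your accompanying sketch of Cilleruelo's Fourier/$L^4$ argument is additional expository material that goes beyond the paper's own treatment.
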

Let $B_n$ denote the $n$--th Bernoulli number, i.e., it is given by
$$\frac{x}{e^x-1}=\sum_{n=0}^{\infty}\frac{B_n}{n!}x^n.$$
It is well known that (see for example \cite{KKS})
$$B_0=1,~B_1=-\frac{1}{2},~B_2=\frac{1}{6},~B_4=-\frac{1}{30},~B_6=\frac{1}{42},~B_8=-\frac{1}{30},~B_{10}=\frac{5}{66},...$$
and $B_n=0$ for odd integers $n\geqslant 3$. The Bernoulli polynomial $B_n(x)~(n=0,1,2,3,...)$ is defined by 
\begin{align}\label{52-3}
B_n(x)=\sum_{i=0}^{n}\binom niB_ix^{n-i},
\end{align}
where $\binom ni=\frac{n!}{i!(n-i)!}$. We have the following identity.
\begin{lemma}\cite[Page 93]{KKS}\label{lem5} 
For natural numbers $r$ and $x$, we have
$$\sum_{n=0}^{x-1}n^{r-1}=\frac{1}{r}(B_r(x)-B_r).$$
\end{lemma}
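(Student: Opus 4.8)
The plan is to prove this classical Faulhaber identity by evaluating the finite geometric sum $\sum_{n=0}^{x-1}e^{nt}$ in two different ways and matching Taylor coefficients in $t$; all manipulations take place in the ring of formal power series in $t$, or equivalently with functions analytic near $t=0$, since $t/(e^t-1)$ extends analytically across the origin.

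First I would record the exponential generating function of the Bernoulli polynomials, starting from the definition \eqref{52-3}. Since $B_n(x)=\sum_{i=0}^n\binom ni B_i x^{n-i}$ is the binomial convolution of the sequences $(B_i)$ and $(x^j)$, the Cauchy product gives
\[
\sum_{n=0}^{\infty}\frac{B_n(x)}{n!}t^n=\left(\sum_{i=0}^{\infty}\frac{B_i}{i!}t^i\right)\left(\sum_{j=0}^{\infty}\frac{(xt)^j}{j!}\right)=\frac{t}{e^t-1}\,e^{xt},
\]
the first factor being the defining series of the $B_i$; in particular the case $x=0$ recovers $\sum_n (B_n/n!)t^n=t/(e^t-1)$.

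Next I would expand $\sum_{n=0}^{x-1}e^{nt}=(e^{xt}-1)/(e^t-1)$. On the one hand,
\[
\frac{e^{xt}-1}{e^t-1}=\frac1t\left(\frac{t\,e^{xt}}{e^t-1}-\frac{t}{e^t-1}\right)=\sum_{r=0}^{\infty}\frac{B_r(x)-B_r}{r!}\,t^{r-1}=\sum_{r=1}^{\infty}\frac{B_r(x)-B_r}{r!}\,t^{r-1},
\]
where the $r=0$ term drops out because $B_0(x)=B_0=1$. On the other hand, expanding each $e^{nt}$ termwise,
\[
\sum_{n=0}^{x-1}e^{nt}=\sum_{n=0}^{x-1}\sum_{j=0}^{\infty}\frac{n^j}{j!}t^j=\sum_{j=0}^{\infty}\frac{t^j}{j!}\sum_{n=0}^{x-1}n^j,
\]
with the convention $0^0=1$, consistent with $e^{0\cdot t}=1$. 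Equating the coefficient of $t^{r-1}$ in the two expressions gives $\frac{1}{(r-1)!}\sum_{n=0}^{x-1}n^{r-1}=\frac{B_r(x)-B_r}{r!}$, that is, $\sum_{n=0}^{x-1}n^{r-1}=\frac1r\bigl(B_r(x)-B_r\bigr)$.

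There is no substantive obstacle here — the statement is a standard fact (Faulhaber's formula). The only points deserving a remark are the legitimacy of the formal/analytic manipulations (all series in sight converge on a punctured neighbourhood of $0$, and $t/(e^t-1)$ is analytic at $0$, so the termwise operations and reindexing are valid) and the bookkeeping in the case $r=1$, where one must use $0^0=1$ consistently on both sides; there the identity reads $\sum_{n=0}^{x-1}1=x$ versus $B_1(x)-B_1=(x-\tfrac12)-(-\tfrac12)=x$, in agreement.
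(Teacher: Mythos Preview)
Your proof is correct: the generating-function argument via $\sum_{n=0}^{x-1}e^{nt}=(e^{xt}-1)/(e^t-1)$ is the standard derivation of Faulhaber's formula, and your bookkeeping (including the $r=1$ case with $0^0=1$) is clean. The paper itself gives no proof of this lemma at all --- it simply quotes the result from \cite[Page 93]{KKS} --- so there is no approach to compare against; your argument supplies what the paper omits.
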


\section{Proofs}

\begin{proof}[ Proof of Theorem \ref{th1}] 
We start with calculations of the summation $\sum_{k=1}^{n}k^{\ell-1}S(k)$, where $S(k)=|S\cap [1,k]|$ for any $1\leqslant k\leqslant n$. On one hand,
\begin{align}\label{e-51-1}\sum_{k=1}^{n}k^{\ell-1}S(k)&=\sum_{k=1}^{n}k^{\ell-1}\sum_{\substack{a\in S\\1\leqslant a\leqslant k}}1=\sum_{a\in S}\sum_{a\leqslant k\leqslant n}k^{\ell-1}.
\end{align}
By Lemma \ref{lem5}, we have
\begin{align}\label{eq-521}
\sum_{a\leqslant k\leqslant n}k^{\ell-1}=\frac{1}{\ell}B_\ell(n+1)-\frac{1}{\ell}B_\ell(a).
\end{align}
Taking equation (\ref{eq-521}) into equation (\ref{e-51-1}) gives
\begin{align}
\sum_{k=1}^{n}k^{\ell-1}S(k)=\frac{1}{\ell}B_\ell(n+1)t-\frac{1}{\ell}\sum_{a\in S}B_\ell(a).
\end{align}
On the other hand, by Lemma \ref{lem4} with $A=S$ and $I=S\cap [1,k]$ and Lemma \ref{lem5}, 
\begin{align}\label{e-51-2}\sum_{k=1}^{n}k^{\ell-1}S(k)&=\sum_{k=1}^{n}k^{\ell-1}|S\cap [1,k]|=\sum_{k=1}^{n}k^{\ell-1}\left(\frac{k}{n}t+E_k\right)\nonumber\\
&=\frac{t}{n(\ell+1)}\left(B_{\ell+1}(n+1)-B_{\ell+1}\right)+\sum_{k=1}^{n}k^{\ell-1}E_k,
\end{align}
where $|E_k|\leqslant 52n^{1/4}(1+(k/n)^{1/2}n^{1/8})(1+L_{+}^{1/2}n^{-1/8}),~~L_{+}=\max\{0,n^{1/2}-t\}.$ Hence, we get
\begin{align}\label{52-2}
\sum_{a\in S}B_\ell(a)&=\left(B_\ell(n+1)-\frac{\ell}{\ell+1}\frac{B_{\ell+1}(n+1)}{n}\right)t+\frac{\ell t}{(\ell+1)n}B_{\ell+1}-\sum_{k=1}^{n}k^{\ell-1}E_k.
\end{align}
From the definition of the Bernoulli polynomial (equation (\ref{52-3})), we have
\begin{align}\label{52-4}
B_\ell(n+1)-\frac{\ell}{\ell+1}\frac{B_{\ell+1}(n+1)}{n}=\frac{1}{\ell+1}n^{\ell}+O_\ell\left(n^{\ell-1}\right)
\end{align}
and
\begin{align}\label{52-5}
B_\ell(a)=a^\ell+O_\ell\left(a^{\ell-1}\right).
\end{align}
Therefore,
\begin{align}\label{52-6}
\sum_{i=1}^t a_i^\ell=\sum_{a\in S}a^\ell=\frac{1}{\ell+1}tn^{\ell}+O_\ell\left(tn^{\ell-1}\right)-\sum_{k=1}^{n}k^{\ell-1}E_k.
\end{align}
It remains to bound the sum involving $E_k$. 
Since
\begin{align}\label{52-7}
|E_k|&\ll 
\begin{cases}
n^{1/4}\left(1+(k/n)^{1/2}n^{1/8}\right),~&\text{if~} t\geqslant  n^{1/2}-n^{1/4}\\
n^{1/8}\left(1+(k/n)^{1/2}n^{1/8}\right)\sqrt{n^{1/2}-t},~&\text{if~}t<n^{1/2}-n^{1/4}
\end{cases}
\nonumber\\&\ll 
\begin{cases}
n^{1/4}+k^{1/2}n^{-1/8},~&\text{if~} t\geqslant n^{1/2}-n^{1/4}\\
\left(n^{1/8}+k^{1/2}n^{-1/4}\right)\sqrt{n^{1/2}-t},~&\text{if~}t<n^{1/2}-n^{1/4},
\end{cases}
\end{align}
it follows that
\begin{align}\label{52-8}
\sum_{k=1}^{n}k^{\ell-1}E_k\ll
\begin{cases}
n^{\ell+3/8},~&~\text{if~}t\geqslant  n^{1/2}-n^{1/4}\\
n^{\ell+1/4}\sqrt{n^{1/2}-t},~&\text{if~}t<n^{1/2}-n^{1/4}.
\end{cases}
\end{align}
Now the theorem follows from the trivial estimates
$$tn^{\ell-1}\ll n^{\ell+1/8}$$
for $t<n^{1/2}+O(n^{1/4})$ and 
$$tn^{\ell-1}\ll n^{\ell+1/4}\sqrt{n^{1/2}-t}$$
provided that $t<n^{1/2}-n^{1/4}$.
\end{proof}

\begin{proof}[ Proof of Theorem \ref{th2}] The proof is an application of the mean value idea. For any $1\leqslant n\leqslant N$, let $\mathscr{S}_n$ be a Sidon set in $\{1,2,...,n\}$ with $|\mathscr{S}_n|=S_n$. By Lemma \ref{lem3} and equation (\ref{52-2}) with $\ell=1$ and $S=\mathscr{S}_n$, we know that
\begin{align}\label{eq3.6}\sum_{a\in \mathscr{S}_n}a&=\left(B_1(n+1)-\frac{1}{2}\frac{B_2(n+1)}{n}\right)S_n+O\left(\frac{1}{\sqrt{n}}\right)-\sum_{k=1}^nE_{n,k}
\nonumber\\
&=\frac{1}{2}n^{3/2}+O(n^{101/80})-\sum_{k=1}^nE_{n,k},
\end{align}
where
\begin{align}\label{eq3.8}
|E_{n,k}|\leqslant 52n^{1/4}(1+(k/n)^{1/2}n^{1/8})(1+L_{n}^{1/2}n^{-1/8}),~~L_{n}=\max\{0,n^{1/2}-S_n\}.
\end{align}
In another word, we shall have
\begin{align}\label{eq3.7} \left|\sum_{a\in \mathscr{S}_n}a-\frac{1}{2}n^{3/2}\right|\ll n^{101/80}+\sum_{k=1}^n|E_{n,k}|.
\end{align}
By equation (\ref{eq3.7}), we immediately obtain that
\begin{align}\label{eq3.9}\sum_{n=1}^{N}\left|\sum_{a\in \mathscr{S}_n}a-\frac{1}{2}n^{3/2}\right|\ll N^{181/80}+\sum_{n=1}^{N}\sum_{k=1}^n|E_{n,k}|.
\end{align}
We now treat the summation involving $E_{n,k}$. From equation (\ref{eq3.8}), we know that
\begin{align}\label{eq3.11}\sum_{k=1}^{n}|E_{n,k}|\ll (n^{1/4}+L_n^{1/2}n^{1/8})\sum_{k=1}^{n}(1+k^{1/2}n^{-3/8})\ll n^{11/8}+L_n^{1/2}n^{5/4}.
\end{align}
From which it can be deduced that
\begin{align}\label{eq3.12}\sum_{n=1}^{N}\sum_{k=1}^{n}|E_{n,k}|\ll\sum_{n=1}^{N} (n^{11/8}+L_n^{1/2}n^{5/4})\ll N^{19/8}+\sum_{n=1}^{N}L_n^{1/2}n^{5/4}.
\end{align}
Suppose that $p_{\ell}$ is the least prime such that $N\leqslant p_{\ell}^2-1$, where $p_i$ is the $i$--th prime and we define $p_0$ to be $1$. Then we have
\begin{align}\label{eq3.13}\sum_{n=1}^{N}L_n^{1/2}n^{5/4}&\leqslant\sum_{m=0}^{\ell-1}\sum_{\substack{p_{m}^2-1<n\leqslant p_{m+1}^2-1}}L_n^{1/2}n^{5/4}.
\end{align}
Recall that $S_{p^2-1}\geqslant p$ from Lemma \ref{lem2}, thus by the definition of $L_n$ in equation (\ref{eq3.8}), we get
\begin{align}\label{eq3.14}\sum_{\substack{p_{m}^2-1<n\leqslant p_{m+1}^2-1}}L_n^{1/2}n^{5/4}&\leqslant\sum_{\substack{p_{m}^2-1<n\leqslant p_{m+1}^2-1}}n^{5/4}(n^{1/2}-p_m)^{1/2}\nonumber\\
&<\sum_{\substack{p_{m}^2<n\leqslant p_{m+1}^2}}n^{5/4}(n^{1/2}-p_m)^{1/2}\nonumber\\
&\ll p_{m+1}^{5/4}\sum_{\substack{p_{m}^2<n\leqslant p_{m+1}^2}}(n^{1/2}-p_m)^{1/2}\nonumber\\
&\ll p_{m+1}^{5/4}\int_{p_m^2}^{p_{m+1}^2}(x^{1/2}-p_m)^{1/2}dx\nonumber\\
&\xlongequal[x=(t+p_m)^2]{t=x^{1/2}-p_m}p_{m+1}^{5/4}\int_{0}^{p_{m+1}-p_m}
2t^{1/2}(t+p_m)dt\nonumber\\
&\ll p_{m+1}^{5/4}(p_{m+1}-p_m)^{5/2}+p_{m+1}^{5/4}p_m(p_{m+1}-p_m)^{3/2}\nonumber\\
&\ll p_{m+1}^{9/4}(p_{m+1}-p_m)^{3/2}.
\end{align}
Taking this into equation (\ref{eq3.13}), we get
\begin{align}\label{eq3.15}\sum_{n=1}^{N}L_n^{1/2}n^{5/4}&\ll\sum_{m=0}^{\ell-1}p_{m+1}^{9/4}(p_{m+1}-p_m)^{3/2}\nonumber\\
&\leqslant\sum_{m=0}^{\ell-1}p_{m+1}^{11/4}(p_{m+1}-p_m)\nonumber\\
&\leqslant p_{\ell}^{11/4}\sum_{m=0}^{\ell-1}(p_{m+1}-p_m)\nonumber\\
&\leqslant p_{\ell}^{15/4}.
\end{align}
Recall that $p_{\ell-1}^2-1<N\leqslant p_{\ell}^2-1$ from the definition of $\ell$, so we have
\begin{align}\label{equation} p_{\ell}=p_{\ell-1}+(p_{\ell}-p_{\ell-1})\leqslant\sqrt{N+1}+O(p_{\ell-1}^{0.525})\ll N^{1/2}.
\end{align}
It follows that 
\begin{align}\label{equation2} \sum_{n=1}^{N}L_n^{1/2}n^{5/4}\ll N^{15/8}.
\end{align}
Thus, we conclude that
\begin{align}\label{eq3.16} \sum_{n=1}^{N}\left|\sum_{a\in \mathscr{S}_n}a-\frac{1}{2}n^{3/2}\right|\ll N^{19/8}
\end{align}
from equations (\ref{eq3.9}), (\ref{eq3.12}) and (\ref{equation2}). Now let $\mathscr{A}$ be the set of integers $n\leqslant N$ such that
\begin{align}\label{eq3.17} \left|\sum_{\substack{a\in \mathscr{S}_n}}a-\frac{1}{2}n^{3/2}\right|>n^{11/8}\log n.
\end{align}
If there are more than $N/(\log N)^{7/19}$ integers in $\mathscr{A}$, we shall have
\begin{align}\label{eq3.18} \sum_{n\in\mathscr{A}}\left|\sum_{\substack{a\in \mathscr{S}_n}}a-\frac{1}{2}n^{3/2}\right|>\sum_{n\leqslant N/(\log N)^{7/19}}n^{11/8}\log n\gg N^{19/8}(\log N)^{1/8}.
\end{align}
This a contradiction with equation (\ref{eq3.16}) for sufficiently large $N$, which means that we should have $|\mathscr{A}|<N/(\log N)^{7/19}$ provided that $N$ is large enough. Therefore, for all integers $n\leqslant N$ but at most $N/(\log N)^{7/19}$ exceptions, we have
\begin{align}\label{equation3} \left|\sum_{\substack{a\in \mathscr{S}_n}}a-\frac{1}{2}n^{3/2}\right|\leqslant n^{11/8}\log n.
\end{align}
\end{proof}

\begin{remark} It is sure that the error term $O(n^{11/8}\log n)$ in Theorem \ref{th2} can be improved to $O(n^{11/8}f(n))$ for the integers of density one provided that $f(n)$ tends to infinity. It seems that this is the best possible which can be achieved by the present method.
\end{remark}

\begin{proof}[ Proof of Theorem \ref{th3}]
It is clear that
\begin{align}\label{t3-1}
\sum_{i=1}^{t}ia_i^{\ell}=\sum_{a\in S}a^{\ell}\sum_{\substack{b\in S\\b\leqslant a}}1=\sum_{a\in S}a^\ell\left(\frac{a}{n}t+E_a\right)=\frac{t}{n}\sum_{a\in S}a^{\ell+1}+\sum_{a\in S}a^\ell E_a
\end{align}
from Lemma \ref{lem4} with $A=S$ and $I=S\cap[1,a]$, where 
\begin{align}\label{t3-2}
|E_a|&\leqslant 52n^{1/4}(1+(a/n)^{1/2}n^{1/8})(1+L_{+}^{1/2}n^{-1/8})~~(\text{with}~~L_{+}=\max\{0,n^{1/2}-t\})\nonumber\\
&\ll 
\begin{cases}
n^{1/4}+a^{1/2}n^{-1/8},~&\text{if~} t\geqslant n^{1/2}-n^{1/4}\\
\left(n^{1/8}+a^{1/2}n^{-1/4}\right)\sqrt{n^{1/2}-t},~&\text{if~}t<n^{1/2}-n^{1/4}.
\end{cases}
\end{align}
For $t\geqslant n^{1/2}-n^{1/4}$, we have
\begin{align}\label{t3-3}
\sum_{a\in S}a^\ell E_a&\ll n^{1/4}\sum_{a\in S}a^\ell+n^{-1/8}\sum_{a\in S}a^{\ell+1/2}\nonumber\\
&\ll n^{1/4}n^\ell t+n^{-1/8}n^{\ell+1/2}t\nonumber\\
&\ll n^{\ell+3/8}t.
\end{align}
For $t<n^{1/2}-n^{1/4}$, we have
\begin{align}\label{t3-4}
\sum_{a\in S}a^\ell E_a&\ll n^{1/8}\sqrt{n^{1/2}-t}\sum_{a\in S}a^\ell+n^{-1/4}\sqrt{n^{1/2}-t}\sum_{a\in S}a^{\ell+1/2}\nonumber\\
&\ll n^{1/8}n^\ell t\sqrt{n^{1/2}-t}+n^{-1/4}n^{\ell+1/2}t\sqrt{n^{1/2}-t}\nonumber\\
&\ll n^{\ell+1/4}t\sqrt{n^{1/2}-t}.
\end{align}
Recall that 
\begin{align}\label{t3-5}
\sum_{a\in S}a^{\ell+1}=\begin{cases}
\frac{1}{\ell+2}tn^{\ell+1}+O_\ell\left(n^{\ell+11/8}\right),~&~\text{if~}t\geqslant  n^{1/2}-n^{1/4}\\
\frac{1}{\ell+2}tn^{\ell+1}+O_\ell\left(n^{\ell+5/4}\sqrt{n^{1/2}-t}\right),~&\text{if~}t<n^{1/2}-n^{1/4}.
\end{cases}
\end{align}
from Theorem \ref{th1},
thus the theorem follows from equations (\ref{t3-1}), (\ref{t3-3}), (\ref{t3-4}) and (\ref{t3-5}).
\end{proof}

\begin{proof}[Proof of Corollary \ref{coro4}] From Corollary \ref{co1} and Theorem \ref{th3} with $\ell=1$, we have
\begin{align}\label{eqq}
\sum_{i=1}^t(n+1-i)a_i&=(n+1)\sum_{i=1}^ta_i-\sum_{i=1}^tia_i\nonumber\\
&=\begin{cases}
\frac{1}{2}n^2t+O\left(n^{19/8}\right),~&~\text{if~}t\geqslant  n^{1/2}-n^{1/4}\\
\frac{1}{2}n^2t+O\left(n^{9/4}\sqrt{n^{1/2}-t}\right),~&\text{if~}t<n^{1/2}-n^{1/4}.
\end{cases}
\end{align}
\end{proof}

\section*{Acknowledgments}
The author is
supported by the Natural Science Foundation of Jiangsu Province of
China, Grant No. BK20210784. He is also supported by the
foundations of the projects ``Jiangsu Provincial
Double--Innovation Doctor Program'', Grant No. JSSCBS20211023 and
``Golden  Phoenix of the Green City--Yang Zhou'' to excellent PhD,
Grant No. YZLYJF2020PHD051.

\end{document}